\def\tillam{{\wt\lam}}
\def\..{{,\dots,}}
\begin{document}

\author{Michael Temkin}
\title{Non-archimedean pinchings}

\address{Einstein Institute of Mathematics\\
               The Hebrew University of Jerusalem\\
                Edmond J. Safra Campus, Giv'at Ram, Jerusalem, 91904, Israel}
\email{michael.temkin@mail.huji.ac.il}
\date{\today}
\subjclass{Primary 14G22; Secondary 14D15}

\keywords{Non-archimedean analytic spaces, pinchings.}

\thanks{This research is supported by ERC Consolidator Grant 770922 - BirNonArchGeom.}

\begin{abstract}
We develop the theory of pinchings for non-archimedean analytic spaces. In particular, we show that although pinchings of affinoid spaces do not have to be affinoid, pinchings of Hausdorff analytic spaces always exist in the category of analytic spaces.
\end{abstract}

\maketitle

\section{Introduction}

\subsection{Pinchings of schemes}
Pushouts in algebraic geometry are subtle constructions which rarely exist. A classical pushout construction in algebraic geometry called {\em pinching} corresponds to the case of a diagram $X\stackrel{i}\hookleftarrow Y\stackrel{h}\to Y'$, where $i$ is a closed immersion and $h$ is finite. In particular, it is often used to obtain a non-normal scheme by {\em pinching} its normalization $X$ along a finite morphism $h$ from one of its closed subschemes $Y$. Moreover, Ferrand studied such pushouts even when $h$ is an arbitrary affine morphism with the following simple result serving as a starting point, see \cite[Lemme~1.3]{Fer}:

\begin{lem}\label{ferrlem}
Let $\phi\:A\to B$ and $\psi\:B'\to B$ be two ring homomorphisms with $A'=A\times_BB'$, and assume that $\phi$ is surjective. Then $\phi'\:A'\to B'$ is surjective, $\Ker(\phi')=\Ker(\phi)$ and $A\otimes_{A'}B'=B$. In addition, if $\psi$ is finite, then $\psi'\:A'\to A$ is finite.
\end{lem}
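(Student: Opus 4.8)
The plan is to work throughout with the explicit description $A'=\{(a,b')\in A\times B' : \phi(a)=\psi(b')\}$, with $\phi'$ and $\psi'$ the two projections to $B'$ and $A$ respectively. The surjectivity of $\phi'$ is then immediate: given $b'\in B'$, surjectivity of $\phi$ provides some $a\in A$ with $\phi(a)=\psi(b')$, so $(a,b')\in A'$ is a preimage of $b'$. For the kernel, I would observe that $\Ker(\phi')=\{(a,0) : \phi(a)=0\}$, and that $\psi'$ restricts to a bijection of this set onto $\Ker(\phi)\subseteq A$ sending $(a,0)\mapsto a$. This is the precise meaning of the identification $\Ker(\phi')=\Ker(\phi)$; in particular it shows $\psi'$ carries the ideal $J:=\Ker(\phi')$ isomorphically onto $I:=\Ker(\phi)$, a fact I will reuse below.

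Next I would deduce $A\otimes_{A'}B'=B$ formally from the first two points. Since $\phi'$ is surjective with kernel $J$, we have $B'=A'/J$, whence $A\otimes_{A'}B'=A\otimes_{A'}(A'/J)=A/(JA)$, where $JA$ denotes the ideal of $A$ generated by the image $\psi'(J)$ under the $A'$-module structure $\psi'\:A'\to A$. By the kernel identification $\psi'(J)=I$, so $JA=I$ and $A\otimes_{A'}B'=A/I$. Finally $\phi$ induces the canonical isomorphism $A/I\cong B$, and unwinding the definitions shows this isomorphism is exactly the natural map $A\otimes_{A'}B'\to B$ induced by $\phi$ and $\psi$.

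The remaining assertion, finiteness of $\psi'$ once $\psi$ is finite, is where the real work lies, and I expect it to be the main obstacle precisely because it is the one claim that does not follow formally from surjectivity of $\phi$. The strategy is to exhibit an explicit finite generating set for $A$ as an $A'$-module. Choose $\bar c_1\..\bar c_n\in B$ generating $B$ over $B'$, and lift them to $c_1\..c_n\in A$ via the surjection $\phi$. Given $a\in A$, write $\phi(a)=\sum_i\psi(b_i')\,\bar c_i$ with $b_i'\in B'$; using surjectivity of $\phi$ again, pick $a_i\in A$ with $\phi(a_i)=\psi(b_i')$, so that $\alpha_i:=(a_i,b_i')\in A'$ and $\psi'(\alpha_i)=a_i$. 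Then $\phi\big(a-\sum_i a_ic_i\big)=0$, so $x:=a-\sum_i\psi'(\alpha_i)\,c_i$ lies in $I$.

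The key point is that every such $x\in I$ is recovered from the $A'$-action on $1\in A$: since $\phi(x)=0=\psi(0)$, the pair $(x,0)$ lies in $A'$, and $(x,0)\cdot 1=\psi'((x,0))=x$. Hence $a=\sum_i\alpha_i\cdot c_i+(x,0)\cdot 1$ lies in the $A'$-submodule generated by $1,c_1\..c_n$, proving that $A$ is a finite $A'$-module and therefore that $\psi'$ is finite. I anticipate that the only delicate bookkeeping is keeping the two $A'$-module structures (on $A$ via $\psi'$, and on $B$ via $\phi'$ followed by $\psi$) consistent, but the explicit presentation of $A'$ makes each verification routine.
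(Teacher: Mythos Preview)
Your argument is correct in every part: the surjectivity and kernel computations are immediate from the explicit description of the fiber product, the tensor identity follows cleanly from $B'=A'/J$ and $\psi'(J)=I$, and the finiteness step is handled by the standard trick of lifting module generators through the surjection $\phi$ and absorbing the defect into the kernel via $(x,0)\in A'$.

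There is nothing to compare against in the paper itself: the lemma is stated with a reference to Ferrand \cite[Lemme~1.3]{Fer} and no proof is supplied. Your writeup is precisely the expected direct verification and would serve as a self-contained substitute for that citation.
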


Furthermore, this construction is compatible with flat base changes and it follows easily, that passing to the spectra $X=\Spec(A)$, $Y=\Spec(B)$, $X'=\Spec(A')$ and $Y'=\Spec(B')$ one has $X'=X\coprod_YY'$ in the category of all schemes (or even stacks). More generally, a Ferrand's pushout datum consists of a closed immersion $i\:Y\into X$ and an affine morphism $h\:Y\to Y'$ of schemes, and it is called a {\em pinching datum} if $h$ is finite. Existence of Ferrand's pushout is very subtle and requires some assumptions. Ferrand described in \cite[Th\'eor\`eme~7.1]{Fer} a necessary condition for existence of the pushout in the category of schemes, and gave examples, when a pinching of projective varieties is not projective or even does not exist. It was shown in \cite{push} that usage of \'etale topology improves the situation: Ferrand's pushouts of schemes exist as algebraic spaces in a wide range of cases (conjecturally always), in particular, pinchings of schemes always exist. If the pushout exists, one has that $Y'\into X'$ is a closed immersion, $Y=X\times_{X'}Y'$, and in the case of pinchings $X\to X'$ is finite.

In general, Ferrand's pushout of varieties can be non-noetherian, but pinchings behave much better: by Artin-Tate lemma if $X$ is of finite type over a noetherian base $S$ (for example, a $k$-variety), then the same is true for $X'$.

\subsection{Liu's example}\label{liusec}
Let now $k$ be a complete non-archimedean field. In general, the theory of $k$-affinoid algebras and $k$-analytic (or rigid) spaces is quite analogous to the theory of affine algebras and algebraic varieties over a field. However, Liu showed in \cite{Liu} that a whole cluster of classical results does not extend to the $k$-analytic setting. Originally, all this was based on an example of an affinoid pinching datum $\calM(\calA)\hookleftarrow\calM(\calB)\to\calM(\calB')$ whose pinching $X'$ exists as an analytic space but is not affinoid. In particular, $\calA'=\calA\times_\calB\calB'$ is a non-affinoid subalgebra of $\calA$ such that $\calA'\into\calA$ is finite, and the non-affinoid space $X'$ has an affinoid (partial) normalization $X=\calM(\calA)$. Note also that an even simpler example of a non-affinoid pinching was constructed in \cite[Example~5.4]{descent}.

Liu's example implies that naive affinoid/analytic analogues of the following results fail: Artin-Tate lemma, affinoidness of fiber products for pinching type of data, the Serre's criterion of affineness with vanishing of cohomology. In a subsequent work \cite{liu-tohoku} Liu constructed a non-affinoid compact Stein domain in a two-dimensional disc, showing that Serre's criterion may fail even for analytic subdomains in a polydisc.

\subsection{Overview of the paper}
Recently a particular type of pinchings of analytic spaces was used in \cite{portayu} by Porta and Yue Yu in their work on cotangent complex of derived analytic spaces. The result really used in their application is correct, but the argument about existence of pinchings was flawed because of a use of affinoid Artin-Tate lemma. The goal of this paper is to clarify the situation with pinchings in the $k$-affinoid and $k$-analytic categories. Section~\ref{affsec} is devoted to a necessary material on affinoid algebras. We start with an affinoid version of Artin-Tate lemma and deduce a criterion when a fiber product $\calA'=\calA\times_\calB\calB'$ of affinoid algebras with a surjective $\phi\:\calA\to\calB$ and a finite $\calB'\to\calB$ is affinoid, see Theorem~\ref{fiberth}. It was observed in \cite{descent} that the constructions of non-affinoid products by Liu and in loc.cit. essentially use that the homomorphism $\phicirc$ (or $\tilphi$) is not surjective. Our result shows that this is not an accident: if the algebras are strict, $\calB$ is reduced and $\phicirc$ is surjective, then $\calA'$ is necessarily affinoid. However, the theorem is formulated more generally to also cover the important pinching case, when $\calB$ is not reduced. Finally, using this result and a descent argument, we show in Theorem~\ref{localpinch} that localizing $X=\calM(\calA)$ enough around $Y=\calM(\calB)$ one can achieve that the pushout is affinoid.

\begin{rem}
Geometrically this can be interpreted as follows: although $i\:Y\into X$ is a closed immersion of affinoid spaces, its affine formal model $\gti\:\Spf(\calAcirc)\to\Spf(\calBcirc)$ does not have to be a closed immersion in general. However, shrinking $X$ we can achieve that $\gti$ becomes a closed immersion, and then an affinoid pinching exists.
\end{rem}

In Section~\ref{pinchsec} we deal with the geometric pinching data. In fact, using the results of Section~\ref{affsec} the arguments are very close to the usual theory of pinchings of schemes. Moreover, analytic topology is fine enough, so it works as well as the \'etale topology. Thus, although the local theory is more pathological than in the case of schemes, the global theory is even nicer: a Hausdorff pinching data always possesses an analytic pushout and the construction is flat-local, see Theorem~\ref{pinchth} for the main result about pinchings.

Studying the failure of Serre's criterion belongs to the theory of Stein spaces and is not a goal of this paper, so we only briefly discuss it in Section~\ref{conjsec}. In the first version of this paper we suggested a conjecture that conceptually explains the differences between the categories of affine varieties and affinoid spaces. Soon after it was indeed proved by M. Xia in a work \cite{Xia} on Liu's spaces. In brief, the main new feature of the analytic category is that there exist non-affinoid Banach $k$-algebras which are locally affinoid, let us call them Liu algebras. Their spectra are precisely the compact Stein spaces, also called Liu spaces, i.e. the compact separated spaces that satisfy Serre's criterion. In other words, there exist ''non-classical'' affinoid objects in the $k$-analytic category whose algebras of functions are not of topologically finite type over $k$. In particular, examples of such objects can be obtained by pinching classical affinoids and first time they were discovered in this way.

\subsection{Conventiones}
By an {\em analytic field} we mean a field $K$ provided with a non-archimedean real valuation $|\ |\:K\to\bfR_{\ge 0}$ and with respect to which $K$ is complete. Throughout this paper $k$ is an analytic ground field and we work with $k$-analytic spaces as defined in \cite[\S1]{berihes}. If $l/k$ is an extension of analytic fields, $\calA$ is a Banach $k$-algebra and $X$ is a $k$-analytic space we will use notation $\calA_l=\calA\wtimes_kl$ and $X_l=X\times_kl$.

\subsection*{Acknowledgments}
The author is grateful to J\'{e}r\^{o}me Poineau, Antoine Ducros and Tony Yu Yue for useful discussions. Also, he is very grateful for the referee for careful reading of the first version of the paper and pointing out various inaccuracies.

\section{Affinoid algebras}\label{affsec}

\subsection{An affinoid version of Artin-Tate lemma}
A simple proof of the classical Artin-Tate lemma can be found in \cite[Tag 00IS]{stacks}. A similar argument with homogeneous elements proves the following graded version, see \cite[Th\'eor\`eme~2.7]{angelique} for details.

\begin{lem}\label{gradedAT}
Assume that $k$ is a noetherian graded ring and $A$ is a finitely generated graded $k$-algebra, which is finite over a graded $k$-subalgebra $B\subseteq A$. Then $B$ is finitely generated over $k$.
\end{lem}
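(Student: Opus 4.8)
The plan is to transpose the standard proof of the ungraded Artin--Tate lemma, as in \cite[Tag 00IS]{stacks}, to the graded setting, keeping all auxiliary choices homogeneous. Write the grading group additively, so that $A=\bigoplus_g A_g$ and similarly for $B$ and $k$. First I would fix a finite homogeneous generating set $x_1\..x_n$ of $A$ as a graded $k$-algebra, together with a finite homogeneous generating set $y_1\..y_m$ of $A$ as a $B$-module; such homogeneous sets exist because $A$ is a finite graded $B$-module.

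Next I would record two families of structure relations. Writing each generator $x_i$ and each product $y_iy_j$ as a $B$-linear combination of the $y_l$ and then extracting the homogeneous component of the appropriate degree, I obtain homogeneous elements $b_{ij},c_{ijl}\in B$ with $x_i=\sum_l b_{il}y_l$ and $y_iy_j=\sum_l c_{ijl}y_l$. Homogeneity can be arranged precisely because both sides of each equation are homogeneous of a fixed degree, so only one graded component of each coefficient is relevant. Let $B_0\subseteq B$ be the graded $k$-subalgebra generated by the finitely many homogeneous elements $b_{ij}$ and $c_{ijl}$; it is graded automatically, being generated by homogeneous elements.

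The key claim is then that $A$ is finite over $B_0$, generated as a $B_0$-module by $1,y_1\..y_m$. Indeed, every monomial in the $x_i$ can be rewritten by using the first family of relations to replace each $x_i$ by a $B_0$-combination of the $y_l$, and then the second family repeatedly to reduce products of the $y$'s, so that all of $A$ lies in $B_0+\sum_l B_0 y_l$. Since $B_0$ is a finitely generated algebra over the noetherian ring $k$, it is itself noetherian, whence $A$ is a noetherian $B_0$-module. Consequently the graded $B_0$-submodule $B\subseteq A$ is a finite $B_0$-module, so $B$ is finitely generated as a $B_0$-algebra, and therefore finitely generated over $k$.

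The only point requiring genuine care, and the one place where the grading enters, is the homogeneity of the structure constants $b_{ij}$ and $c_{ijl}$; once these are chosen homogeneous, $B_0$ is a graded subalgebra and the rest of the argument is word-for-word the ungraded one. I expect no real obstacle beyond this bookkeeping, since the noetherianity of $B_0$ and the module-finiteness step do not interact with the grading.
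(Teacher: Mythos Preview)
Your proposal is correct and matches the paper's approach: the paper does not give a self-contained proof but simply remarks that the classical Artin--Tate argument from \cite[Tag 00IS]{stacks} goes through once the auxiliary generators and structure constants are chosen homogeneous, referring to \cite[Th\'eor\`eme~2.7]{angelique} for details. Your write-up carries this out exactly, and the only graded-specific point you flag---extracting the correct homogeneous component of each coefficient $b_{il}$ and $c_{ijl}$---is precisely the bookkeeping the paper alludes to.
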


Now, we can deal with the affinoid case -- see the converse implications below. A naive version of such a result fails due to Liu's example, so one has to add an assumption on the reductions (or something analogous). We formulate the result for general affinoid algebras; in the non-strict case this requires to work with the graded reduction $$\oplus_{r>0}\{a\in\calA|\ \rho_\calA(a)\le r\}/\{a\in\calA|\ \rho_\calA(a)<r\}$$ introduced in \cite{Temkin-local-properties} and denoted $\tilcalA_\gr$ below to distinguish it from the usual reduction, which is the homogeneous component of weight 1.

\begin{theor}\label{AT}
Assume that $\calA$ is a $k$-affinoid algebra, which is finite over one of its $k$-subalgebras $\calB$, and provide $\calB$ with the norm induced from $\calA$. Then

(i) $\calB$ is $k$-affinoid if and only if $\tilcalA_\gr$ is finite over $\tilcalB_\gr$.

(ii) If $\calA$ is strictly $k$-affinoid, then $\calB$ is $k$-affinoid if and only if $\tilcalA$ is finite over $\tilcalB$.
\end{theor}
\begin{proof}
The direct implications in (i) and (ii) follow from \cite[Proposition~3.1(iii)]{Temkin-local-properties} and \cite[6.3.5/1]{bgr}, respectively.

Conversely, consider the embedding $\phi\:\calB\into\calA$ and assume that $\tilphi_\gr$ is finite. Since $\calB\into\calA$ is an isometry, it is also an isometry with respect to the spectral seminorms, and hence $\tilphi_\gr$ is injective. So, $\tilcalB_\gr$ is finitely generated over $\tilk_\gr$ by the graded Artin-Tate lemma \ref{gradedAT}. Choose generators $\tilb_1\..\tilb_n$ of $\tilcalB_\gr$ and lift them to $\calB$. Set $r_i=\rho_\calB(b_i)$ and consider the homomorphism $\psi\:\calC=k\{r_1^{-1}T_1\..r_n^{-1}T_n\}\to\calA$ taking $T_i$ to $b_i$. Clearly, $\psi$ factors through $\calB$ and the map $\tilcalC_\gr\to\tilcalB_\gr$ is onto. Thus, $\tilcalA_\gr$ is finite over $\tilcalC_\gr$, and by \cite[Proposition~3.1(iii)]{Temkin-local-properties} $\psi$ is finite. Since $\calC$ is noetherian, the $\calC$-submodule $\calB$ of $\calA$ is finite and hence $k$-affinoid by \cite[Proposition~2.1.12]{berbook}. This proves the inverse implication in (i).

The inverse implication in (ii) is proved similarly, but one works with the usual reduction $\tilphi$ instead, uses the usual Artin-Tate and refers to \cite[6.3.5/1]{bgr} instead of \cite[3.1(iii)]{Temkin-local-properties}.
\end{proof}

\subsection{Fiber product}
Now, we give a criterion when a fiber product of affinoid algebras is affinoid.

\begin{theor}\label{fiberth}
Assume that $k$ is non-trivially valued and let $\psi\:\calB'\to\calB$ and $\phi\:\calA\onto\calB$ be homomorphisms of strictly $k$-affinoid algebras such that $\psi$ is finite admissible, $\phi$ is surjective and $\psi^\circ(\calB'^\circ)\subseteq\phi^\circ(\calA^\circ)$. Then the algebra $\calA'=\calA\times_\calB\calB'$ with the norm induced from $\calA\times\calB'$ is $k$-affinoid.
\end{theor}


\begin{proof}
By the usual theory of pinchings, $\psi'\:\calA'\to\calA$ is finite and $\phi'\:\calA'\to\calB'$ is surjective. Therefore the embedding $\lam\:\calA'\into\calA\times\calB'$ is finite and in view of Theorem~\ref{AT}, it suffices to prove that $\tillam$ is finite. Thus, we should prove that the homomorphisms $\tilpsi'$ and $\tilphi'$ are finite.

Note that $\calA'^\circ=\calAcirc\times_{\calB}\calB'^\circ=\calAcirc\times_C\calB'^\circ$, where $C=\phicirc(\calAcirc)\subseteq\calBcirc$. The homomorphism $\calA'^\circ\to\calB'^\circ$ is surjective by Lemma~\ref{ferrlem}, hence $\tilphi'$ is surjective. The homomorphism $\calB'^\circ\to C$ is integral by \cite[Theorem~6.3.5/1]{bgr}, hence $C$ is a filtered union of finite $\calB'^\circ$-subalgebras $C_i$. Let $A_i\subseteq\calAcirc$ be the preimage of $C_i$. Then $\calA'^\circ$ is the filtered union of subalgebras $A_i\times_{C_i}\calB'^\circ$ and the homomorphisms $A_i\times_{C_i}\calB'^\circ\to A_i$ is finite by the usual theory of pinchings. Therefore $\calA'^\circ$ is integral over $\calA^\circ$. Passing to the reductions we obtain that the homomorphism of $\tilk$-algebras $\wt{\calA'}\to\tilcalA$ is integral, and since $\tilcalA$ is finitely generated over $\tilk$, it is, in fact, finite.
\end{proof}

\subsection{Relative maximal modulus principle}
The usual maximum modulus principle states that for a $k$-affinoid space $X=\calM(\calA)$ and an element $f\in\calA$ one has that $\rho(f)=\max_{x\in X}|f(x)|$. We will need the following relative version:

\begin{lem}\label{rellem}
Assume that $X=\calM(\calA)$ is an affinoid space with a Zariski closed affinoid subspace $Y=\calM(\calB)$ and $f\in\calA$ is a function on $X$. Set $r=\rho_\calB(f|_Y)$, then

(i) If $r>0$, then there exists a neighborhood $W$ of $Y$ such that $\max_{x\in W}|f(x)|=r$.

(ii) If $r=0$, then for any $\veps>0$ there exists a neighborhood $W$ of $Y$ such that $\max_{x\in W}|f(x)|<\veps$.
\end{lem}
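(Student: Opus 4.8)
The plan is to reduce both parts to the behaviour of the spectral seminorm of $f$ on a standard cofinal system of affinoid neighbourhoods of $Y$. Write $\phi\:\calA\onto\calB$ for the surjection defining the closed immersion $Y\into X$, and let $g_1\..g_m$ be generators of $\Ker(\phi)$, so that $Y=\{x\in X:\ g_1(x)=\dots=g_m(x)=0\}$. For $\delta>0$ the Laurent domains $W_\delta=\{x\in X:\ |g_j(x)|\le\delta\ (1\le j\le m)\}$ form a cofinal family of affinoid neighbourhoods of $Y$, and I set $r_\delta=\max_{x\in W_\delta}|f(x)|=\rho_{\calA_\delta}(f)$, where $\calA_\delta$ is the affinoid algebra of $W_\delta$. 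Since $Y\subseteq W_\delta$ one has $r_\delta\ge\max_{x\in Y}|f(x)|=\rho_\calB(f|_Y)=r$ by the usual maximum modulus principle, and $r_\delta$ is nondecreasing in $\delta$.

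First I would dispose of (ii). If $r=0$ then $|f|$ vanishes on $Y$, so $\{x\in X:\ |f(x)|<\veps\}$ is an open neighbourhood of $Y$; as the $W_\delta$ are cofinal among neighbourhoods of $Y$, some $W_\delta$ is contained in it, and since $|f|$ is continuous and $W_\delta$ is compact the maximum $r_\delta$ is attained at a point where $|f|<\veps$. Thus $\max_{x\in W_\delta}|f(x)|<\veps$, which is exactly (ii); here no information beyond $r_\delta\to 0$ is needed.

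The content of (i) is the sharper claim that the limit $\lim_{\delta\to 0}r_\delta=r$ is in fact attained, i.e. that $r_\delta=r$ for all sufficiently small $\delta$; any such $W_\delta$ then satisfies $\max_{x\in W_\delta}|f(x)|=r$, the value $r$ being realized on $Y$. Equivalently, one must show that $\{x\in X:\ |f(x)|>r\}$ does not accumulate along $Y$, so that $\{|f|\le r\}$ already contains a neighbourhood of $Y$. The soft part is clear: for any $s>r$ the open set $\{|f|<s\}$ contains $Y$, hence contains some $W_\delta$, giving $r_\delta\le s$ and so $r_\delta\downarrow r$. The difficulty is that a priori $\{|f|>r\}$ could accumulate at the finitely many Shilov points of $Y$ at which $|f|=r$, with $|f|\to r$ from above; a purely topological compactness argument only reproduces this scenario and cannot exclude it.

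This exactness is where I expect the real work to lie, and I would resolve it with the graded reduction machinery of this section rather than by soft analysis. After harmlessly rescaling $r$ to $1$ (using the graded reduction to avoid any hypothesis $r\in\sqrt{|k^*|}$), the function $\delta\mapsto r_\delta$ is governed by the reduction $\tilcalA_\gr$, which is finitely generated over $\tilk_\gr$ by Lemma~\ref{gradedAT} and Theorem~\ref{AT}. The point is that $\log r_\delta$ is piecewise-linear in $\log\delta$ with finitely many slopes, the slopes measuring the growth of $f$ in the normal directions $g_j$; concretely $r_\delta=\max\bigl(r,\ c_1\delta^{a_1}\..c_N\delta^{a_N}\bigr)$ for finitely many $c_i>0$ and exponents $a_i$. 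A constant term exceeding $r$ or a negative exponent is excluded by $r_\delta\downarrow r$ and by $r_\delta\le\rho_\calA(f)<\infty$, so every nonconstant term has $a_i>0$ and is dominated by $r$ once $\delta$ is small. Hence $r_\delta=r$ for small $\delta$, proving (i). The crux, and the only step that genuinely uses more than the maximum modulus principle, is thus the finiteness of $\tilcalA_\gr$, which forces this monomial description and thereby rules out anomalous accumulation of $\{|f|>r\}$ against $Y$.
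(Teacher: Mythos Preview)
Your treatment of (ii) is fine and matches the paper's one-line appeal to compactness of $Y$.

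For (i) your approach diverges from the paper's and, as written, has a genuine gap. The crux of your argument is the assertion that $r_\delta=\max\bigl(r,\,c_1\delta^{a_1}\..c_N\delta^{a_N}\bigr)$ for finitely many $(c_i,a_i)$, which you attribute to finite generation of $\tilcalA_\gr$ via Lemma~\ref{gradedAT} and Theorem~\ref{AT}. But those two results concern recognising when a \emph{subalgebra} of an affinoid algebra is itself affinoid; they say nothing about how $\rho_{\calA_\delta}(f)$ varies with $\delta$. Finite generation of $\tilcalA_\gr$ over $\tilk_\gr$ is indeed true (it is part of the basic structure theory, not a consequence of \ref{gradedAT} or \ref{AT}), but the step from that to a piecewise-monomial formula for $r_\delta$ is a nontrivial tropicalisation/Bieri--Groves type statement that you have not proved or cited. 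Without it, the scenario you yourself flag---$\{|f|>r\}$ accumulating on the Shilov boundary of $Y$ with $|f|\downarrow r$---remains unexcluded, and the ``soft'' part of your argument only gives $r_\delta\downarrow r$, not $r_\delta=r$ eventually.

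The paper avoids all of this with a short relative-interior argument. Set $X'=X\{r^{-1}f\}$; since the closed immersion $Y\into X$ is finite one has $Y=\Int(Y/X)$, and applying \cite[Proposition~2.5.8(iii)]{berbook} to the composition $Y\into X'\into X$ yields $Y\subset\Int(X'/X)$. By \cite[Corollary~2.5.13(ii)]{berbook} the relative interior of an affinoid subdomain equals its topological interior, so $X'$ is a neighbourhood of $Y$, which is exactly the claim that $r_\delta=r$ for small $\delta$. This uses only Berkovich's theory of boundaries and is far more economical than the polytopal route you sketch.
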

\begin{proof}
Claim (ii) follows from the compactness of $Y$. To prove (i) it suffices to show that the affinoid domain $X'=X\{r^{-1}f\}$ is a neighborhood of $Y$ in $X$. Since $Y\to X$ is finite, $Y=\Int(Y/X)$ and by \cite[Proposition~2.5.8(iii)]{berbook} applied to the composition $Y\into X'\into X$ we obtain that $Y\subset\Int(X'/X)$. By \cite[Corollary~2.5.13(ii)]{berbook}, $\Int(X'/X)$ is the topological interior of $X'$  inside $X$, hence $X'$ is a neighborhood of $Y$.
\end{proof}

\subsection{Descent}
For a tuple $r=(r_1\..r_n)$ of positive numbers let $K_r$ be the completed fraction field of $k(t_1\..t_n)$ with the generalized Gauss norm given by $|t_i|=r_i$. Also, we call an extension of analytic fields $l/k$ {\em topgebraic} if it can be embedded into $\whka/k$. The following result seems to be well-known to experts, but it is not easy to find a precise reference, so we provide a proof.

\begin{lem}\label{affdescent}
Let $l$ be a topgebraic extension of some $K_r$ with $r_1\..r_n$ linearly independent over $\sqrt{|k^\times|}$. Then a Banach $k$-algebra $\calA$ is $k$-affinoid if and only if $\calA_l=\calA\wtimes_kl$ is $l$-affinoid.
\end{lem}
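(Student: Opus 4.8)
The plan is to prove the non-trivial ``if'' implication by descending a system of generators of the graded reduction, in the spirit of the proof of Theorem~\ref{AT}. The direct implication is routine: if $\calA$ is $k$-affinoid, pick an admissible epimorphism $k\{\rho^{-1}T_1\..\rho^{-1}T_m\}\onto\calA$; base changing along $k\to l$ (which preserves admissible epimorphisms) produces an admissible epimorphism $l\{\rho^{-1}T_1\..\rho^{-1}T_m\}\onto\calA_l$, so $\calA_l$ is $l$-affinoid. Assume conversely that $\calA_l$ is $l$-affinoid. Since $\calA$ is already a complete (Banach) $k$-algebra, by the argument of Theorem~\ref{AT} it suffices to produce finitely many $b_1\..b_m\in\calA$ whose images generate $\tilcalA_\gr$ over $\tilk_\gr$: setting $s_i=\rho_\calA(b_i)$ and $\calC=k\{s_1^{-1}T_1\..s_m^{-1}T_m\}$, the homomorphism $\psi\:\calC\to\calA$, $T_i\mapsto b_i$, induces a surjection on graded reductions, hence is finite by \cite[Proposition~3.1(iii)]{Temkin-local-properties}, and then $\calA$ is a finite $\calC$-module, hence $k$-affinoid.

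Thus everything reduces to descending finite generation of the graded reduction, and the key input is the base change formula
\[
\wt{(\calA_l)}_\gr\ \cong\ \tilcalA_\gr\otimes_{\tilk_\gr}\wt l_\gr .
\]
Granting this, I note that $\tilk_\gr$ and $\wt l_\gr$ are graded fields, being graded reductions of the valued fields $k$ and $l$; hence $\tilk_\gr\to\wt l_\gr$ has a homogeneous basis, so it is free and in particular faithfully flat. Since $\calA_l$ is $l$-affinoid, $\wt{(\calA_l)}_\gr$ is a finitely generated graded $\wt l_\gr$-algebra (the graded reduction of any affinoid algebra is finitely generated over that of the base field, see \cite{Temkin-local-properties}). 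Clearing denominators, I may choose homogeneous generators lying in $\tilcalA_\gr\otimes 1$. The $\tilk_\gr$-subalgebra $R\subseteq\tilcalA_\gr$ they generate then satisfies $R\otimes_{\tilk_\gr}\wt l_\gr=\wt{(\calA_l)}_\gr=\tilcalA_\gr\otimes_{\tilk_\gr}\wt l_\gr$, whence $R=\tilcalA_\gr$ by faithful flatness. Lifting these finitely many generators to $\calA$ yields the elements $b_1\..b_m$ required above.

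It remains to establish the base change formula, which is the main obstacle and the only place where the hypotheses on $l$ enter; I would prove it in two steps matching the factorization $k\to K_r\to l$ together with $\calA_l=(\calA_{K_r})\wtimes_{K_r}l$. For the field $K_r$, linear independence of $r_1\..r_n$ over $\sqrt{|k^\times|}$ is precisely what forces distinct monomials $t^\alpha$ to have pairwise incommensurable values in the generalized Gauss norm; consequently no cancellation occurs upon reduction, $\wt{(K_r)}_\gr$ is the graded Laurent ring $\tilk_\gr[\wt t_1^{\pm1}\..\wt t_n^{\pm1}]$ (itself a graded field), and the tensor-product norm on $\calA\wtimes_kK_r$ reduces to $\tilcalA_\gr\otimes_{\tilk_\gr}\wt{(K_r)}_\gr$. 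For the topgebraic extension $l/K_r$ the graded field $\wt l_\gr$ is algebraic over $\wt{(K_r)}_\gr$, and one checks that graded reduction commutes with the completed base change $(\calA_{K_r})\wtimes_{K_r}l$ by passing to the finite subextensions of $K_r$ inside $\whka$, where the completed tensor product is the ordinary (finite) one and reduction is well behaved, and then taking the filtered limit. Composing the two isomorphisms gives $\tilcalA_\gr\otimes_{\tilk_\gr}\wt{(K_r)}_\gr\otimes_{\wt{(K_r)}_\gr}\wt l_\gr=\tilcalA_\gr\otimes_{\tilk_\gr}\wt l_\gr$, which is the displayed formula and completes the argument.
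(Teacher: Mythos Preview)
Your strategy via graded reductions is different from the paper's proof, which never touches reductions: for the $K_r$-step it quotes \cite[Corollary~2.1.8]{berbook} and argues by induction on $n$, and for the topgebraic step it passes to $K=\whka$, perturbs a system of affinoid generators of $\calA_K$ into $\calA_F$ for a finite Galois $F/k$ (via \cite[Proposition~2.1.7]{berbook}), descends surjectivity using exactness of $\wtimes_FK$, and then applies Galois descent \cite[6.3.3/3]{bgr}.

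Your argument, however, has a genuine circularity. You invoke \cite[Proposition~3.1(iii)]{Temkin-local-properties} for $\psi\:\calC\to\calA$, but that result is about morphisms of \emph{affinoid} algebras; here $\calA$ is only Banach, and its affinoidness is exactly the conclusion sought. The implication ``$\tilpsi_\gr$ surjective $\Rightarrow$ $\psi$ finite'' is false for general Banach targets: take $\calA=k\{T\}\oplus M$ with $M$ an infinite-dimensional Banach $k$-space on which $k\{T\}$ acts through the augmentation and with $M\cdot M=0$; then every $(f,m)$ has spectral radius $\rho(f)$, so $\tilcalA_\gr=\wt{k\{T\}}_\gr$ and $k\{T\}\into\calA$ is an isomorphism on graded reductions without being finite. (A related issue in the same step: to even define $\psi$ you need $\|b_i\|\le s_i$, not just $\rho_\calA(b_i)\le s_i$, and without affinoidness you cannot renorm as in Theorem~\ref{AT}.) The base change formula also needs more care: you argue with the tensor-product \emph{norm}, but $\tilcalA_\gr$ is built from the spectral seminorm, and the identity $\rho_{\calA_{K_r}}(\sum a_\alpha t^\alpha)=\max_\alpha\rho_\calA(a_\alpha)\,r^\alpha$ is not justified for an arbitrary Banach $\calA$; likewise the finite-subextension argument for the topgebraic step presumes affinoid-type control of $\rho$. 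Finally, ``clearing denominators'' does not move a sum of pure tensors into $\tilcalA_\gr\otimes 1$; the correct (and easy) fix is to expand each generator as $\sum_j b_{ij}\otimes\lambda_{ij}$ and keep all the $b_{ij}$.
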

\begin{proof}
Only the descent result needs a proof. We can assume that $l$ is not trivially valued, as otherwise we can replace it by any $l_r$ with $r\neq 1$. We will prove in two steps that $\calA_r=\calA\wtimes_k K_r$ and $\calA$ are affinoid. The latter follows from \cite[Corollary 2.1.8]{berbook} by induction on $n$, hence it suffices to consider the case when $l/k$ is topgebraic and $k$ is non-trivially valued. Set $K=\whka$, then $\calA_K=\calA_l\wtimes_lK$ is $K$-affinoid, and it suffices to establish the descent from $K$. Note, that $k^s$ is dense in $K$ because the valuation is non-trivial.

Choose affinoid generators $f_1\..f_n\in\calA_K$, that is, fix a surjective homomorphism $\psi\:K\{r_1^{-1}t_1\..r_n^{-1}t_n\}\to\calA_K$ sending $t_i$ to $f_i$. Recall that by \cite[Proposition~2.1.7]{berbook} a small perturbation of this system is still a family of generators, hence we can assume that $f\subset \calA_F$ for a finite Galois extension $F/k$. Then $\psi$ is the base change of the homomorphism $F\{r^{-1}t\}\to\calA_F$, which is surjective since $\wtimes_FK$ is exact. So, $\calA_F$ is $F$-affinoid and then $\calA=(\calA_F)^G$ for $G=\Gal(F/k)$ is $k$-affinoid by \cite[Proposition~6.3.3/3]{bgr}.
\end{proof}

The lemma covers our needs, so we do not pursue the generality and only discuss it without proof.

\begin{rem}
(i) One can remove any assumption on $r$ in the lemma. In particular, one does not even have to assume that the tuple is finite. However, it is important for the proof that $K_r/k$ has an orthogonal Schauder basis.

(ii) The lemma does not hold for an arbitrary extension $l/k$ of analytic fields. For example, if $x\in\bfA^1_k$ is of type 4 and radius $r$, then $l=\calH(x)$ is not $k$-affinoid, while $l\wtimes_kl\toisom k\{r^{-1}t\}$ is $l$-affinoid.
\end{rem}

\subsection{Local pinchings}
Now we can prove the main result of Section~\ref{affsec}: after shrinking an affinoid pinching data around the closed subspace, one can achieve that the fiber product is affinoid.

\begin{theor}\label{localpinch}
Assume that $\phi\:\calA\onto\calB$ and $\psi\:\calB'\onto\calB$ are finite admissible homomorphisms of $k$-affinoid algebras and $\psi$ is even surjective, and let $f_1\..f_n$ a family of generators $\Ker(\phi)$. For any $\veps>0$ set $\calA_\veps=\calA\{\veps^{-1}f_1\..\veps^{-1}f_n\}$ and consider the natural homomorphism $\phi_\veps\:\calA_\veps\to\calB$. Then there exists $\veps_0>0$ such that the fiber product $\calA'_\veps=\calA_\veps\times_\calB\calB'$ is $k$-affinoid for $\veps\le\veps_0$.
\end{theor}
\begin{proof}
Choose $r=(r_1\..r_m)$ such that $r_1\..r_m$ are linearly independent over $|k^\times|$, $K_r$ is non-trivially valued and the algebras $\calA\wtimes K_r$ and $\calB\wtimes K_r$ are strictly $K_r$-affinoid. Set $l=\wh{K_r^a}$. Since the functor $\wtimes_k l$ is exact, $\calA'_l=\calA_l\times_{\calB_l}\calC_l$. Thus, in view of Lemma~\ref{affdescent} it suffices to prove that $\calA'_l$ is $l$-affinoid. This reduces the claim to the following case: $k$ is non-trivially valued and algebraically closed and the algebras are strictly $k$-affinoid.

Next, choose a surjective homomorphism of strictly affinoid algebras $\calB''\onto\calB'$ with a reduced $\calB''$ and set $\calA''_\veps=\calA_\veps\times_\calB\calB''$. Then the squares in the following diagram are cartesian
$$\xymatrix{
\calA''_\veps \ar[r]\ar[d]^{\phi''}& \calA'_\veps \ar[r]\ar[d]^{\phi'}& \calA_\veps \ar[d]^\phi\\
\calB'' \ar[r]& \calB' \ar[r]& \calB
}
$$
and hence $\calA'_\veps$ is a quotient ring of $\calA''_\veps$. Therefore, it suffices to prove that $\calA''_\veps$ is affinoid for a small $\veps$, and replacing $\calB'$ by $\calB''$ we can assume that $\calB'$ is reduced. In particular, by \cite[Theorem~6.4.3/1]{bgr} $\calB'^\circ$ is generated by finitely many elements $b'_1\..b'_m$ as an adic $\kcirc$-algebra.

For each $i$ choose a lift $a_i\in\calA$ of $b_i=\psi(b'_i)$. Since $\rho(b_i)\le 1$ and the closed subset $\calM(B)$ of $\calM(\calA)$ is the intersection of the subdomains $\calM(\calA_\veps)$, we obtain from Lemma~\ref{rellem} that taking $\veps$ small enough one can achieve that $\rho_{\calA_\veps}(a_i)\le 1$. This implies that $\psi(\calB'^\circ)\subseteq\phi(\calA_\veps^\circ)$ and hence $\calA'_\veps$ is affinoid by Theorem~\ref{fiberth}.
\end{proof}

\section{Pinchings}\label{pinchsec}

\subsection{Pinching data}
By a {\em pinching datum} in the $k$-analytic category we mean a diagram consisting of a closed immersion $i\:Y\into X$ and a finite morphism $h\:Y\to Y'$. Usually we will denote pinching datum using notations like $\calP=(Y;X,Y')$, $\calP_i=(Y_i;X_i,Y'_i)$, etc. A {\em morphism} of pinching data $f\:\calP_1\to\calP$ consists of morphisms $f_X\:X_1\to X$, $f_Y\:Y_1\to Y$, $f_{Y'}\:Y'_1\to Y'$ such that $i_1$ and $h_1$ are pullbacks of $i$ and $h$, respectively. A morphism is flat, smooth, embedding of a subdomain, etc., if it is so componentwise. In particular, a {\em subdomain} $\calP_0\subseteq\calP$ is a compatible family of subdomains in $X,Y$ and $Y'$. A covering of $\calP$ by some of its subdomains is {\em admissible} if it is so componentwise.

\subsection{Affinoid pushouts}
A pinching datum is {\em affinoid} if all its components are affinoid, say $X=\calM(\calA)$, $Y=\calM(\calB)$ and $Y'=\calM(\calB')$, and it is {\em strongly affinoid} if, in addition, the algebra $\calA'=\calA\times_\calB\calB'$ is affinoid. Similarly, for an affinoid datum $\calP_i$ we will denote the algebras $\calB_i;\calA_i,\calB'_i$, etc. In the above situation $X'=\calM(\calA')$ is the pushout in the category of $k$-affinoid spaces, so we will use the notation $X'=\coprod^\aff\calP$.

The notion of flat morphisms we will use in the sequel was introduced by Ducros in \cite[\S4.1]{flat}. Recall, that $f\:\calM(\calA)\to\calM(\calB)$ is {\em naively flat} if $\calB\to\calA$ is flat, and by \cite[Theorem~8.3.6]{flat} $f$ is flat if and only if $f_K$ is naively flat for any analytic extension $K/k$.

\begin{lem}\label{affpinch}
Assume that $\calP=(Y;X,Y')$ is a strongly affinoid pinching datum and $X'=\coprod^\aff\calP$, then

(i) $X\to X'$ is a finite morphism, $Y'\into X'$ is a closed immersion, $Y=X\times_{X'}Y'$, and $|X'|=|X|\coprod_{|Y|}|Y'|$ as topological spaces.

(ii) Pulling back an affinoid space $X'_0$ over $X'$ to the pinching datum $$\calP\times_{X'}X'_0=(Y\times_{X'}X'_0;X\times_{X'}X'_0,Y'\times_{X'}X'_0)$$ yields an equivalence between the categories of affinoid $X$-flat spaces and strongly affinoid $\calP$-flat pinching data over $\calP$. An opposite equivalence is given by the affinoid pushout construction.
\end{lem}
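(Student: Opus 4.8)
The plan is to reduce both parts to commutative algebra via Ferrand's Lemma~\ref{ferrlem} and then mirror the scheme-theoretic theory of pinchings. The only genuinely analytic inputs I expect to need are: that completed and ordinary tensor products agree on finite modules, so that the conductor exact sequence base-changes correctly; Ducros's criterion \cite[Theorem~8.3.6]{flat}, to pass between ring-theoretic and geometric flatness; and the affinoid/topological pushout supplied by (i). Write $\phi\:\calA\onto\calB$ and $\psi\:\calB'\to\calB$, so that $\calA'=\calA\times_\calB\calB'$.

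For (i), Lemma~\ref{ferrlem} gives that $\phi'\:\calA'\to\calB'$ is surjective, whence $Y'\into X'$ is a closed immersion; that $\psi'\:\calA'\to\calA$ is finite, whence $X\to X'$ is finite; and that $\calA\otimes_{\calA'}\calB'=\calB$. Since $\calA$ is finite over $\calA'$ this tensor product is a finite $\calB'$-module, hence already Banach, so $\calA\wtimes_{\calA'}\calB'=\calB$ and therefore $Y=X\times_{X'}Y'$. For the topological assertion, let $I=\Ker(\phi)=\Ker(\phi')$ be the common conductor; then $\calA/\calA'\cong\calB/\calB'$ is annihilated by $I$, so localizing at any $f\in I$ yields $\calA'_f\toisom\calA_f$, and hence $X\to X'$ restricts to an isomorphism $X\setminus Y\toisom X'\setminus Y'$. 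Combined with $Y=X\times_{X'}Y'$, this shows that the canonical continuous map $|X|\coprod_{|Y|}|Y'|\to|X'|$ is a bijection; as the source is a quotient of the compact space $|X|\sqcup|Y'|$ and $|X'|$ is compact Hausdorff, the bijection is automatically a homeomorphism.

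For (ii) I would treat the two directions separately. The easy one is that an affinoid $X'_0=\calM(\calA'_0)$ flat over $X'$ pulls back to a strongly affinoid $\calP$-flat datum recovering $X'_0$: here one tensors the conductor sequence $0\to\calA'\to\calA\times\calB'\to\calB\to 0$ over $\calA'$ with $\calA'_0$. All three terms are finite $\calA'$-modules, so completed tensoring equals ordinary tensoring, and naive flatness of $\calA'_0$ (a consequence of flatness over $X'$) preserves exactness, giving $0\to\calA'_0\to\calA_0\times\calB'_0\to\calB_0\to 0$ with $\calA_0=\calA\wtimes_{\calA'}\calA'_0$, etc.; that is, $\calA'_0=\calA_0\times_{\calB_0}\calB'_0=\coprod^\aff(\calP\times_{X'}X'_0)$. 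Flatness of each component is inherited by base change, so the pullback functor lands in the asserted category and the affinoid pushout is its left inverse.

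The heart of the matter, and what I expect to be the main obstacle, is the opposite direction: for a strongly affinoid $\calP$-flat datum $\calP_0$ the pushout $X'_0=\coprod^\aff\calP_0$ must be flat over $X'$. The difficulty is precisely that $\calA'\to\calA$ is finite but not flat, so flatness cannot be descended naively from the components. My plan is to invoke Ducros's criterion together with exactness of $\wtimes_kK$ (which commutes with the pushout, again by the finite-module argument) to reduce to the purely algebraic statement that Ferrand's pushout of a flat pinching datum is flat; this I would check locally on $\Spec\calA'$, inheriting flatness over $X'\setminus Y'\cong X\setminus Y$ from that of $X_0\to X$ via (i) applied to $\calP_0$, and establishing it along the conductor subscheme $Y'$ by the local criterion of flatness for the $I$-adic filtration, whose graded pieces are controlled by flatness of $\calB'_0$ over $\calB'$ and the base-change compatibility of the conductor sequence. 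Once flatness of $X'_0\to X'$ is known, the easy direction applies to it, and the canonical morphism $\calP_0\to\calP\times_{X'}X'_0$ over $\calP$ is checked to be an isomorphism componentwise -- being one away from $Y'$ by (i) for $\calP_0$, and on $Y'$ by the identification $\calB'_0=\calA'_0/I\calA'_0=\calB'\wtimes_{\calA'}\calA'_0$ coming from flat base change -- which shows the two functors are mutually inverse.
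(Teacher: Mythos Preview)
Your sketch is correct and tracks the paper's proof closely: both reduce to Ferrand's affine theory via Lemma~\ref{ferrlem}, use that completed and ordinary tensor products coincide on finite modules, and handle the discrepancy between naive flatness and Ducros-flatness by noting that $\wtimes_kK$ preserves the conductor exact sequence $0\to\calA'\to\calA\times\calB'\to\calB\to 0$, so the fiber product commutes with ground-field extension.

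There are two minor structural differences worth noting. First, for the topological identification $|X'|=|X|\coprod_{|Y|}|Y'|$ you argue directly that $\calA'_f\toisom\calA_f$ for $f\in I$, hence $X\setminus Y\toisom X'\setminus Y'$; the paper instead invokes the already-proved (ii) for affinoid subdomains of $X'\setminus Y'$ to get the same conclusion. Your route is more self-contained and avoids the forward reference, at the cost of a small check that the algebraic localization statement really controls the Berkovich fibers (it does, since $X\to X'$ is finite and $\calA\otimes_{\calA'}\calH(x')=\calH(x')$ for $x'\notin Y'$). Second, where the paper simply cites ``the usual theory of pinchings of affine schemes'' for the scheme-level equivalence of flat categories, you sketch the argument via the local criterion of flatness along $I$; this is exactly the standard proof (as in Ferrand or the Stacks project), so you are just unpacking what the paper takes as a black box.
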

\begin{proof}
The homomorphisms of rings $\calA\onto\calB$, $\calA'\onto\calB'$, $\calA'\to\calA$ and $\calB'\to\calB$ in (i) are finite, hence all completed tensor products in (i) and (ii) coincide with the usual tensor products, and everything in (i) and (ii) follows from the usual theory of pinchings of affine schemes, except the following two issues: (a) on the nose one obtains equivalence of naively flat data, (b) the classical theory deals with the topological spaces of affine spectra, so one has to show separately that $|X'|=|X|\coprod_{|Y|}|Y'|$.

(a) The claim about flatness easily reduces to the naive flatness once we prove that for any analytic extension $K/k$ one has $\calA'_K=\calA_K\times_{\calB_K}\calB'_K$. But this is so because the functor $\wtimes_kK$ preserves exactness of the sequence of finite $\calA'$-modules $$0\to\calA'\to\calA\times\calB'\to\calB\to 0.$$

(b) Applying (ii) to an affinoid domain $X'_0\subset X'\setminus Y'$ we obtain that the pushout of $\calP\times_{X'}X'_0=(\emptyset;X\times_{X'}X'_0,\emptyset)$ is isomorphic to $X'_0$, that is, $X\times_{X'}X'_0=X'_0$. This implies that $U=X\setminus Y=X'\setminus Y'$ and we obtain that set-theoretically
$$|X|\coprod_{|Y|}|Y'|=\left(|Y|\coprod|U|\right)\coprod_{|Y|}|Y'|=|Y'|\coprod|U|=|X'|.$$ It remains to show that a subset $V\subseteq X'$ is open whenever its preimages in $X$ and $Y'$ are open, and this follows from the fact that $X\coprod Y'\to X'$ is a surjective continuous map of compact spaces.
\end{proof}

\subsection{Strongly affinoid covers}
The main technical result about pinchings is existence of strongly affinoid covers.

\begin{lem}\label{pinchcover}
Any Hausdorff pinching datum $\calP=(Y;X,Y')$ possesses an admissible strongly affinoid covering $\calP=\cup_i\calP_i$.
\end{lem}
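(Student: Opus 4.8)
The plan is to build the cover in two stages: first produce an admissible cover of $\calP$ by \emph{affinoid} pinching sub-data, and then shrink each piece toward its closed subspace to make it strongly affinoid by Theorem~\ref{localpinch}. The affinoid pieces are manufactured from a cover of the base $Y'$, exploiting that $h\:Y\to Y'$ is finite, so that the preimage of an affinoid domain of $Y'$ is an affinoid domain of $Y$ and the preimage of a rational domain is again rational. Thus I would start by choosing, for each $y'$ in the image $h(Y)$, a small rational affinoid domain $Y'_0\ni y'$, set $Y_0=h^{-1}(Y'_0)$ (a rational domain of $Y$ containing the whole fibre $h^{-1}(y')$), and then extend $Y_0$ to an affinoid domain $X_0$ of $X$ with $X_0\cap Y=Y_0$. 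Points of $X\setminus Y$ and of $Y'\setminus h(Y)$ (the latter open because the finite $h$ is closed) are covered by the trivial sub-data $(\emptyset;X_0,\emptyset)$ and $(\emptyset;\emptyset,Y'_0)$, which are automatically strongly affinoid.

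The main step, and the main obstacle, is the extension $Y_0\rightsquigarrow X_0$ along the closed immersion $i\:Y\into X$. The difficulty is that the fibre $F=h^{-1}(y')$ is a finite set of points of $Y$ that may be spread across several affinoid charts of $X$, so one cannot simply intersect $Y_0$ with a single chart. Here I would use that $X$ is Hausdorff: the finitely many points of $F$ admit pairwise disjoint affinoid domains $W_1\..W_s$ of $X$, and a finite disjoint union of affinoid domains is again an affinoid domain, so $W=\coprod_j W_j$ is an affinoid domain of $X$ whose trace $V=W\cap Y$ is an affinoid neighbourhood of $F$ in $Y$. By properness of the finite map $h$ one may then shrink $Y'_0$ so that $Y_0=h^{-1}(Y'_0)\subseteq V$. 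Now $Y_0$ is a rational domain of the affinoid $V$, cut out by finitely many functions in the algebra of $V$; lifting these functions along the surjection associated to the closed immersion $V\into W$ and imposing the same inequalities on $W$ produces an affinoid domain $X_0\subseteq W$ of $X$ with $X_0\cap Y=X_0\cap V=Y_0$, as required.

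With the affinoid sub-datum $\calP_0=(Y_0;X_0,Y'_0)$ in hand, I would apply Theorem~\ref{localpinch} to the closed immersion $Y_0\into X_0$ and the finite map $Y_0\to Y'_0$: replacing $X_0$ by a small tube $X_0\{\veps^{-1}f_1\..\veps^{-1}f_n\}$ around $Y_0$, with the $f_j$ generating the ideal of $Y_0$, makes the fibre product algebra affinoid, so $\calP_0$ becomes strongly affinoid. Crucially this shrinking is performed \emph{towards} $Y_0$, so the tube still satisfies $X_0\cap Y=Y_0$ and, by the relative maximum principle of Lemma~\ref{rellem}, remains a neighbourhood of $Y_0$ in $X$. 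Letting $y'$ range over $h(Y)$ and assembling these pieces with the trivial data over $X\setminus Y$ and over $Y'\setminus h(Y)$, I obtain a family of strongly affinoid sub-data covering $\calP$ componentwise.

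It remains to check that the resulting cover is admissible, i.e. that each component is covered by a quasi-net of affinoid domains. Since every $X_0$ is a neighbourhood of its part $Y_0$ of $Y$, the tubes cover a neighbourhood of $Y$ in $X$ while the trivial charts cover $X\setminus Y$, and analogous statements hold for $Y$ and $Y'$. I expect the extension step of the second paragraph --- in particular handling multi-point fibres via disjoint unions of charts, and the passage from good to general (non-good) $X$, where a fibre point need only lie in an affinoid chart rather than possess an affinoid neighbourhood --- to be the delicate part, while the reduction to Theorem~\ref{localpinch} and the verification of the quasi-net condition are routine.
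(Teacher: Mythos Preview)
Your proposal follows essentially the same route as the paper: separate fibre points in the Hausdorff $X$ by disjoint affinoid charts, shrink $Y'_0$ so that $Y_0$ lands inside their union, lift the rational inequalities cutting out $Y_0$ to the charts to obtain $X_0$, then invoke Theorem~\ref{localpinch} and cover the complement by trivial sub-data. The paper organizes this through an intermediate reduction (first to affinoid $\calP$, then to affinoid $Y$ and $Y'$, then the general case), which is what makes the ``rational in $V$'' step clean: once $Y$ and $Y'$ are affinoid, a Laurent neighbourhood $Y'_0$ of $y'$ pulls back to a Laurent domain of $Y$, which splits into rational pieces $Y_i\subseteq W_i\cap Y$.

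The delicate point you flag --- why the fibre points $y_i$ admit affinoid \emph{neighbourhoods} (not merely affinoid domains) in a possibly non-good $X$ --- is exactly what the paper resolves, and the trick is worth knowing: since $i\:Y\into X$ is a closed immersion, the induced map of germ reductions $\tilY_{y_i}\to\tilX_{y_i}$ is an isomorphism; once $Y$ has been arranged to be affinoid, $\tilY_{y_i}$ is affine, hence so is $\tilX_{y_i}$, and by \cite[Theorem~5.1]{Temkin-local-properties} the germ $X_{y_i}$ is good. Without this observation your shrinking step ``$Y_0\subseteq V$'' could fail, since $V=W\cap Y$ need not be a neighbourhood of the fibre if the $W_j$ are merely affinoid domains.
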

\begin{proof}
Step 1. {\it It suffices to prove the result for a subdomain $\calP'=(Y;W,Y')$, where $W$ is a subdomain of $X$, which is a neighborhood of $Y$.} Indeed, assume that $\calP'$ possesses a strongly affinoid covering $\calP'=\cup_i\calP_i$, find an admissible affinoid covering $X\setminus Y=\cup_{j\in J}X_j$ and set $\calP_j=(\emptyset;X_j,\emptyset)$. Then it is easy to see that $\cup_{l\in I\cup J}X_l$ is an admissible covering of $X$ and hence $\calP=\cup_{l\in I\cup J}\calP_l$ is an admissible strongly affinoid covering.

Step 2. {\it The case of an affinoid $\calP$.} If $Y=\calM(B)$, $X=\calM(\calA)$ and $Y'=\calM(\calB')$ are affinoid then by Theorem~\ref{localpinch} there exists an affinoid neighborhood $X_\veps=\calM(\calA_\veps)$ of $Y$ in $X$ such that $(Y;X_\veps,Y')$ is strongly affinoid. It remains to use Step 1.

Step 3. {\it The case of affinoid $Y$ and $Y'$.} Fix for a while a point $y'\in Y'$ and consider the fiber $h^{-1}(y')=\{y_1\..y_n\}$. Since $X$ is Hausdorff, there exists pairwise disjoint neighborhoods $W_i$ of $y_i$. Since $Y\into X$ is a closed immersion, the map of germ reductions $\tilY_{y_i}\into\tilX_{y_i}$ is an isomorphism for each $i$. By \cite[Theorem~5.1]{Temkin-local-properties} $\tilX_{y_i}$ is affine and $X_{y_i}$ is good, so shrinking $W_i$'s we can assume that they are affinoid.

Note that for a small enough Laurent neighborhood $Y'_0$ of $y'$ in $Y'$, its preimage $Y_0=Y'_0\times_{Y'}Y$ splits as $Y_0=\coprod_{i=1}^n Y_i$, where each $Y_i$ is an affinoid domain containing $y_i$. Furthermore, shrinking $Y'_0$ if necessary we can achieve that $Y_i\subseteq W_i$. Each $Y_i$ is a Weierstrass domain of the Laurent domain $Y_0$ of $Y$, hence $Y_i$ is a rational domain in $Y$. It follows that $Y_i$ is also a rational domain in $W_i\times_XY$ given by the same inequalities $|f_1|\le r_1|g|\..|f_m|\le r_m|g|$ with $f_j,g\in\calB$. Lifting $f_j|_{W_i\times_XY}$ and $g|_{W_i\times_XY}$ to functions $F_j,G$ on $W_i$ we obtain a rational domain $X_i=W_i\{r^{-1}\frac{F}{G}\}$ such that $X_i\times_XY=Y_i$. Setting $X_0=\coprod_{i=1}^nX_i$ we obtain an affinoid subdomain $\calP_0=(Y_0;X_0,Y'_0)$ of $\calP$ such that $Y'_0$ is a neighborhood of $y'$.

Since $Y'$ is compact, varying $y'$ we can find a finite set of affinoid subdata $\calP_1\..\calP_m$ of $\calP$ such that $Y'=\cup_{j=1}^mY'_j$ and hence also $Y_j=\cup_{j=1}^mY_j$. Note also that the domain $X_0=\cup_{j=1}^mX_j$ is a neighborhood of $Y$ in $X$. By step 2 each $\calP_j$ possesses a strongly affinoid covering, hence $\calP_0=(Y;X_0,Y')$ possesses such a cover too, and it remains to use step 1.

Step 4. {\it The general case.} Choose an admissible affinoid covering $Y'=\cup_{i\in I} Y'_i$. Then $Y_i=Y\times_{Y'}Y'_i$ form an admissible affinoid covering of $Y$. Lift $Y_i$ to an analytic compact domain $X_i\subseteq X$, then $X_0=\cup_i X_i$ is a neighborhood of $Y$ in $X$. If $W$ is a sufficiently small neighborhood of $Y$ in $W$, then $W_i=X_i\cap W$ form an admissible covering of $W$. Each $\calP_i=(Y_i;W_i,Y'_i)$ possesses a strongly affinoid covering by step 3, hence the lemma follows by applying step 1 to the datum $\calP'=(Y;W,Y')$.
\end{proof}

\begin{rem}
The assumption that $X$ is Hausdorff is necessary. For example, the lemma fails for the datum $(Y;X,Y')$, where $X$ is the unit disc with the doubled origin, $Y=\calM(k)\coprod\calM(k)$ is the doubled origin and $Y'=\calM(k)$.
\end{rem}

\subsection{Existence of pinchings}
If a pushout $X'=X\coprod_YY'$ exists we will use the notation $X'=\coprod\calP$ and denote the induced morphisms $i'\:Y'\to X'$ and $h'\:X'\to X$. Any $X'$-space $X'_0$ induces the pullback pushout $\calP_0=\calP\times_{X'}X'_0$. Here is our main result about pinchings. We follow the terminology of \cite{maculanpoineua} where a compact Stein space is called a {\em Liu space}.

\begin{theor}\label{pinchth}
(i) Existence: any Hausdorff pinching datum $\calP=(Y;X,Y')$ possesses a pushout $X'=\coprod\calP$.

(ii) Compatibility with topologies and structure sheaves: $X'=X\coprod_{Y}Y'$, $V\subseteq X'$ is an analytic domain if and only if its preimages in $X$ and $Y'$ are analytic domains and $$\calO_{X'_G}=i'_*\calO_{Y'_G}\times_{i_*h'_*\calO_{Y_G}}h'_*\calO_{X_G}.$$

(iii) Bicartesianity: $i'$ is a closed immersion, $h'$ is finite and $Y=X\times_{X'}Y'$.

(iv) The affinoid and Liu cases: $X'$ is a Liu space if and only if $\calP$ is a Liu space, and $X'$ is affinoid if and only if $\calP$ is strongly affinoid.

(v) Flat sites and uniformity: the pushout and pullback functors $\calP_0\mapsto\coprod\calP_0$ and $X'_0\mapsto \calP\times_{X'}X'_0$ are inverse equivalences between the categories of $\calP$-flat Hausdorff $\calP$-data $\calP_0$ and $X'$-flat Hausdorff $X'$-spaces $X'_0$.

(vi) The equivalence from (v) respects the following properties: smooth, embedding of a subdomain and the properties (i)--(xx) from \cite[Theorem~1.2]{descent}, including quasi-smooth, quasi-\'etale, \'etale, finite, proper, etc., (flatness and topological separatedness are automatically assumed on both sides, so the claim in their case is vacuous).
\end{theor}
\begin{proof}
First, let us construct a space accordingly to (ii). Consider the topological pushout $X'=X\coprod_YY'$, provide it with the pushout $G$-topology: $V\subseteq X'$ is an analytic subdomain if and only if its preimages in $X$ and $Y'$ are analytic subdomains, declare $V$ affinoid if its preimage $\calP_V=(Y_V;X_V,Y'_V)$ in $\calP$ is strongly affinoid and use the homeomorphism $V=\calM(\calA_V\times_{\calB_V}\calB'_V)$ from Lemma~\ref{affpinch}. It follows from Lemma~\ref{pinchcover} that strongly affinoid covers of $\calP$ form a net, therefore the so-defined affinoid domains form a net in $X'$, and it is now a routine check that $X'$ with this net of affinoid domains is an analytic space.

Now, let us check that $X'$ is the pushout in the category of $k$-analytic spaces. Assume that $X\to T$ and $Y'\to T$ are morphisms that agree on $Y$, and let us prove that they factor through $X'$ uniquely. Choose an admissible affinoid covering $T=\cup_i T_i$. By Lemma~\ref{pinchcover} its preimage to $\calP$ possesses a strongly affinoid refinement $\calP=\cup_j\calP_j$ with each $\calP_j=(Y_j;X_j,Y'_j)$ mapping to some $T_{i(j)}$. The corresponding affinoid domain $X'_j\subseteq X'$ is the pushout of $\calP_j$ in the category of $k$-affinoid spaces, hence the morphism $\calP_j\to T_{i(j)}$ factors uniquely through $X'_j$. Working with affinoid covers of intersections it is easy to see that the induced morphisms $X'_j\to T$ are compatible, hence we obtain a unique morphism $X'\to T$ through which $\calP\to T$ factors. This finishes the proof of (i) and (ii).

The affinoid case of (v) was proved in Lemma~\ref{affpinch}(ii) and the general case follows because both the pushout and the pullback are constructed locally -- one glues pushouts (resp. pullbacks) of strongly affinoid data (resp. affinoid spaces). Similarly, (iii) follows from Lemma~\ref{affpinch}(i).

Since the morphism $X\to X'$ is finite, $X'$ is a Liu space if and only if $X$ is a Liu space, and then also the closed subspaces $Y$ and $Y'$ are Liu. Similarly, if $X'$ is affinoid, then $\calP$ is affinoid, and it is strongly affinoid because $\calO_{X'}(X')=\calO_X(X)\times_{\calO_Y(Y)}\calO_{Y'}(Y')$ by the pushout property. Conversely, if $\calP$ is strongly affinoid then the pushout is affinoid due to the construction in the first paragraph.

Finally, the properties in (vi) are stable under base change, hence only descent should be established. A subdomain is a flat monomorphism and a smooth morphism is a boundaryless quasi-smooth one, so we should only deal with properties (i)--(xx). Properties (i)--(iv) follow from the fact that $f\:Y\coprod X'\to X$ is surjective, see \cite[Theorem~3.4]{descent}, and properties (v)--(xiv) follow from the fact that $f$ is also finite, and hence $G$-surjective and properly surjective, see \cite[Theorems~3.8 and 3.12]{descent}. It remains to recall that the descent of flatness is provided by claim (v), and as in the proof of \cite[Theorem~4.7]{descent} the remaining properties are combinations of flatness with the properties whose descent was already established.
\end{proof}

\section{Liu algebras and spaces}\label{conjsec}

\subsection{Liu algebras}
Affinoid algebras are Banach-theoretic analogues of finitely generated $k$-algebras. A foundational result in algebraic geometry states that $A$ is finitely generated over $k$ whenever it is locally finitely generated over $k$: there exist elements $f_1\..f_n\in A$ generating the unit ideal and such that the localizations $A_{f_i}$ are finitely generated over $k$. By a {\em localization} of a Banach $k$-algebra $\calA$ we mean an algebra of the form $\calA'=\calA\{r^{-1}\frac{f}{g}\}$, where $f=(f_1\..f_n)$ and $g$ generate the unit ideal. Then $X'=\calM(\calA')$ is a subset of $X=\calM(\calA)$ called a rational subset.

\begin{defin}
A Banach $k$-algebra $\calA$ is called {\em locally $k$-affinoid algebra} if there exist localizations $\calA_i$ of $\calA$ such that each $\calA_i$ is $k$-affinoid, the spaces $X_i=\calM(\calA_i)$ cover $X=\calM(\calA)$ and the map $\phi\:\calA\to\prod_i\calA_i$ is universally injective in the following sense: for any localization $\calA'=\calA\{r^{-1}\frac{f}{g}\}$ the base change map $\phi\ \wtimes_\calA\calA'$ is injective.
\end{defin}

\begin{rem}
(i) Unlike the usual commutative algebra, the injectivity of $\phi$ is not automatic and there might exist exotic nilpotent elements killed in all localizations. This is one of mechanisms in which the structure presheaf on $X$ can fail to be a sheaf. It is not clear if universal injectivity is equivalent to injectivity, so one has to use the universal version of $\phi$ in the definition.

(ii) Similarly to \cite[Lemma~8.2.2/2]{bgr} it is easy to see that any cover by rational subsets possesses a refinement of a very special form, called a {\em rational cover}: it is determined by elements $f_1\..f_n$ generating the unit ideal and numbers $r_1\..r_n$ by $\calA_i=\calA\{\frac{r_i}{r_1}\frac{f_1}{f_i}\..\frac{r_i}{r_n}\frac{f_n}{f_i}\}$.

(iii) Using Theorem \ref{localpinch} it is easy to see that for any affinoid pinching datum $(\calB;\calA,\calB')$ the fiber product $\calA'$ is a locally affinoid algebra. Liu's example, shows that it does not has to be affinoid.
\end{rem}

Note that the spectrum of a Liu algebra possesses a natural structure of a $k$-analytic space glued from affinoid rational domains. Following the method of \cite[\S8.2]{bgr} M. Xia has recently obtained the following generalization of Tate's acyclicity for these spaces (see the proof of \cite[Theorem~B.4]{Xia}):

\begin{theor}[Xia]\label{tateconj}
Given a locally affinoid algebra $\calA$ provide $X=\calM(\calA)$ with the natural structure of the $k$-analytic space. Then for any rational subset $X'=\calM(\calA')$ in $X$ one has that $\calO_X(X')=\calA'$ and for any finite covering of $X$ by rational domains the \v{C}ech complex is acyclic.
\end{theor}

Also, it seems plausible that using the methods of \cite[\S6.4]{bgr} one can prove that, similarly to some results of Section~\ref{affsec}, finiteness of reduction should provide a criterion for Liu algebra to be affinoid:

\begin{conj}
Assume that $k$ is stable and $\calA$ is a Liu algebra over $k$, then

(i) $\calA$ is affinoid if and only if $\tilcalA_\gr$ is a finitely generated $\tilk_\gr$-algebra.

(ii) If $\calM(\calA)$ is strictly $k$-analytic, then $\calA$ is affinoid if and only if $\tilcalA$ is finitely generated over $\tilk$.
\end{conj}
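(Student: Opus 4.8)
The direct implications are the easy half and I would dispose of them first: for an affinoid $\calA$ the graded reduction $\tilcalA_\gr$ is a finitely generated $\tilk_\gr$-algebra by \cite[Proposition~3.1(iii)]{Temkin-local-properties}, and in the strict case $\tilcalA$ is finitely generated over $\tilk$ by \cite[6.3.5/1 and 6.4.3/1]{bgr}. The real content is the two converse implications, and the plan is to imitate the proof of Theorem~\ref{AT}, replacing the a~priori affinoid ambient algebra used there by the Stein-theoretic input of Xia's Theorem~\ref{tateconj}.

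I would first reduce (i) to (ii) by descent. Exactly as in the proof of Theorem~\ref{localpinch}, choose $r=(r_1\..r_n)$ linearly independent over $\sqrt{|k^\times|}$ so that $\calA\wtimes_k K_r$ becomes strict, and pass to the topgebraic extension $l=\wh{K_r^a}$. Since $\wtimes_k l$ is exact, the localizations of $\calA$ base change to localizations of $\calA_l$ which still cover $\calM(\calA_l)$, and universal injectivity of $\calA\to\prod_i\calA_i$ is preserved; hence $\calA_l$ is again a Liu algebra, now with $\calM(\calA_l)$ strictly $l$-analytic. As $k$ is stable, the graded reduction commutes with this base change (cf. \cite{Temkin-local-properties}), so the graded reduction of $\calA_l$ is finitely generated over $\wt{l}_\gr$ precisely when $\tilcalA_\gr$ is finitely generated over $\tilk_\gr$. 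Granting (ii) over $l$ we conclude that $\calA_l$ is $l$-affinoid, whence $\calA$ is $k$-affinoid by the descent Lemma~\ref{affdescent}. Thus everything reduces to (ii).

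For (ii), assuming as we may that $k$ is nontrivially valued, I would argue as in Theorem~\ref{AT}. Using the finite generation of $\tilcalA$ choose homogeneous generators, lift them to $f_1\..f_n\in\calA$ with $\rho_\calA(f_i)\le r_i$, and form $\psi\:\calC=k\{r_1^{-1}T_1\..r_n^{-1}T_n\}\to\calA$ sending $T_i$ to $f_i$; by construction the reduction $\tilpsi$ is surjective. The whole point is now to upgrade surjectivity of $\tilpsi$ to surjectivity of $\psi$, which for a genuinely affinoid $\calA$ is where one simply invokes \cite[6.3.5/1]{bgr}, but which here must be produced by hand via the successive-approximation scheme of \cite[\S6.4]{bgr}: given $a\in\calA$ with $\rho_\calA(a)\le 1$, surjectivity of $\tilpsi$ yields $P_0\in\calC$ with $\rho_\calA(a-\psi(P_0))<1$, and rescaling (here the nontrivial valuation is used) and iterating produces a series $\sum_j\psi(P_j)$ converging to $a$. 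If this closes up, $\psi$ is surjective and $\calA\cong\calC/\Ker(\psi)$ is a quotient of a Tate algebra, hence affinoid.

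The main obstacle is exactly the convergence in the last step: for the partial sums to converge in the \emph{Banach} norm of $\calA$ and to have limit $a$, one needs the residue seminorm attached to $\psi$ to be equivalent to the given norm of the Liu algebra. For an honest affinoid algebra this equivalence is the Banach open mapping theorem, which is unavailable here a~priori and is the crux of the matter. I expect the gap to be bridged by Xia's acyclicity Theorem~\ref{tateconj}, which identifies $\calA$ with $\calO_X(X)$ for the compact Stein space $X=\calM(\calA)$ and thereby supplies the maximum principle and the completeness of a Stein algebra that must play the role of the affinoid open mapping theorem; the faithful flatness $\tilk_\gr\to\wt{l}_\gr$ needed for the descent in the previous paragraph is the secondary technical point where the stability hypothesis on $k$ is essential.
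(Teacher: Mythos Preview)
The statement you are trying to prove is recorded in the paper as a \emph{conjecture}, not a theorem: the paper provides no proof, only the remark that ``it seems plausible that using the methods of \cite[\S6.4]{bgr}'' such a criterion can be established. So there is no paper proof to compare your proposal against.

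That said, your sketch is very much in the spirit of what the author has in mind. The direct implications are indeed routine, and imitating the proof of Theorem~\ref{AT} together with the descent via Lemma~\ref{affdescent} is the natural shape of the argument. You correctly isolate the genuine obstacle: once you have $\psi\:\calC\to\calA$ with surjective reduction, the successive-approximation scheme from \cite[\S6.4]{bgr} only produces convergence in the \emph{spectral} seminorm, and to conclude surjectivity of $\psi$ in the Banach sense you need the spectral seminorm on $\calA$ to be equivalent to the given Banach norm. For affinoid algebras this is \cite[6.2.4]{bgr}; for a general Liu algebra it is not available a priori, and Theorem~\ref{tateconj} alone does not obviously supply it. Your hope that Xia's acyclicity bridges this gap is plausible but is precisely the missing idea --- you would need, for instance, to show that a Liu algebra is uniform (spectral seminorm equivalent to norm), perhaps by exploiting stability of $k$ to control norms on each affinoid piece and then gluing via acyclicity. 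As written, your proposal is an outline that faithfully identifies where the difficulty lies, but it does not resolve the conjecture.

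A secondary point: in your reduction of (i) to (ii) you assert that graded reduction commutes with the base change $\wtimes_k l$ because $k$ is stable. This is delicate and deserves justification; stability controls the reduction of affinoid algebras, but $\calA$ is not yet known to be affinoid, so one must argue via the affinoid cover of the Liu space and use acyclicity again.
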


\subsection{Liu spaces}
Liu studied compact Stein spaces (in rigid-theoretic setting), and his results were extended to general analytic spaces by Maculan and Poineau. In
\cite[Theorem~1.11]{maculanpoineua} they proved that a compact space $X$ has trivial higher coherent cohomology if and only if it is separated, has enough global functions to separate elements and the structure sheaf has trivial higher cohomology, and coined the term {\em Liu space} for such spaces $X$. The algebra $\calA=\calO_X(X)$ is then called a {\em Liu algebra}. By \cite[Proposition~3.12]{maculanpoineua} any Liu space $X$ possesses a cover by rational domains, which are affinoid, and by \cite[Corollary~3.17]{maculanpoineua} $X=\calM(\calA)$, where $\calA=\calO_X(X)$. Therefore, $\calA$ is locally affinoid and $X$ is its spectrum.

Conversely, Theorem~\ref{tateconj} implies that for any locally affinoid algebra $\calA$ the spectrum $X=\calM(\calA)$ has an acyclic structure sheaf and hence is a Liu space. This yields \cite[Theorem~B.4]{Xia}: a $k$-Banach algebra is locally affinoid if and only if it is Liu. In particular, Serre's criterion works once one replaces affinoid spaces by Liu spaces. From this point of view, the analogy with the category of varieties is much more complete, and the main difference is that any variety, which is an affine scheme $\Spec(A)$, is an affine variety (that is $A$ is finitely generated), while Liu's spaces are generalized affinoid objects, whose algebra is not topologically finitely generated.

\bibliographystyle{amsalpha}
\bibliography{ap}

\providecommand{\bysame}{\leavevmode\hbox to3em{\hrulefill}\thinspace}
\providecommand{\MR}{\relax\ifhmode\unskip\space\fi MR }
\providecommand{\MRhref}[2]{%
  \href{http://www.ams.org/mathscinet-getitem?mr=#1}{#2}
}
\providecommand{\href}[2]{#2}
\begin{thebibliography}{{Xia}21}

\bibitem[Ber90]{berbook}
Vladimir~G. Berkovich, \emph{Spectral theory and analytic geometry over
  non-{A}rchimedean fields}, Mathematical Surveys and Monographs, vol.~33,
  American Mathematical Society, Providence, RI, 1990. \MR{1070709 (91k:32038)}

\bibitem[Ber93]{berihes}
\bysame, \emph{\'{E}tale cohomology for non-{A}rchimedean analytic spaces},
  Inst. Hautes \'Etudes Sci. Publ. Math. (1993), no.~78, 5--161 (1994).
  \MR{1259429 (95c:14017)}

\bibitem[BGR84]{bgr}
S.~Bosch, U.~G{\"u}ntzer, and R.~Remmert, \emph{Non-{A}rchimedean analysis},
  Grundlehren der Mathematischen Wissenschaften [Fundamental Principles of
  Mathematical Sciences], vol. 261, Springer-Verlag, Berlin, 1984, A systematic
  approach to rigid analytic geometry. \MR{746961 (86b:32031)}

\bibitem[CT]{descent}
Brian Conrad and Michael Temkin, \emph{Descent for nonarchimedean analytic
  spaces}, Tunis. J. Math. \textbf{3}, no.~4, 689--748.

\bibitem[Duc18]{flat}
Antoine Ducros, \emph{Families of {B}erkovich spaces}, Ast\'{e}risque (2018),
  no.~400, vii+262. \MR{3826929}

\bibitem[Fer03]{Fer}
Daniel Ferrand, \emph{Conducteur, descente et pincement}, Bull. Soc. Math.
  France \textbf{131} (2003), no.~4, 553--585. \MR{2044495 (2005a:13016)}

\bibitem[Liu88]{Liu}
Qing Liu, \emph{Un contre-exemple au ``crit\`ere cohomologique
  d'affino\"{\i}dicit\'{e}''}, C. R. Acad. Sci. Paris S\'{e}r. I Math.
  \textbf{307} (1988), no.~2, 83--86. \MR{954265}

\bibitem[Liu90]{liu-tohoku}
\bysame, \emph{Sur les espaces de {S}tein quasi-compacts en g\'{e}om\'{e}trie
  rigide}, Tohoku Math. J. (2) \textbf{42} (1990), no.~3, 289--306.
  \MR{1066662}

\bibitem[MP21]{maculanpoineua}
Marco Maculan and J\'{e}r\^{o}me Poineau, \emph{Notions of {S}tein spaces in
  non-archimedean geometry}, J. Algebraic Geom. \textbf{30} (2021), no.~2,
  287--330. \MR{4233184}

\bibitem[Poi13]{angelique}
J\'{e}r\^{o}me Poineau, \emph{Les espaces de {B}erkovich sont ang\'{e}liques},
  Bull. Soc. Math. France \textbf{141} (2013), no.~2, 267--297. \MR{3081557}

\bibitem[PY20]{portayu}
Mauro Porta and Tony~Yue Yu, \emph{Representability theorem in derived analytic
  geometry}, J. Eur. Math. Soc. (JEMS) \textbf{22} (2020), no.~12, 3867--3951.
  \MR{4176782}

\bibitem[{Sta}]{stacks}
The {Stacks Project Authors}, \emph{{\itshape Stacks Project}},
  \url{http://stacks.math.columbia.edu}.

\bibitem[Tem04]{Temkin-local-properties}
Michael Temkin, \emph{On local properties of non-{A}rchimedean analytic spaces.
  {II}}, Israel J. Math. \textbf{140} (2004), 1--27. \MR{2054837 (2005c:14030)}

\bibitem[TT16]{push}
Michael Temkin and Ilya Tyomkin, \emph{Ferrand pushouts for algebraic spaces},
  Eur. J. Math. \textbf{2} (2016), no.~4, 960--983. \MR{3572553}

\bibitem[{Xia}21]{Xia}
M.~{Xia}, \emph{{On Liu morphisms in non-archimedean geometry}}, June 2021,
  \url{https://arxiv.org/abs/2106.08032}.

\end{thebibliography}

\end{document}